\numberwithin{equation}{section}
\newtheorem{theorem}{Theorem}[section]
\newtheorem{cor}[theorem]{Corollary}
\theoremstyle{definition}
\newtheorem{rem}{Remark}
\title{Conformal Geometric Inequalities on the Klein bottle}
\author{Chady El Mir}
\email{chady.mir@gmail.com}
\author{Zeina Yassine}
\email{zeina.y.yassine@gmail.com}
\address{Current adress : \newline
Department of Mathematics and computer science\newline
Beirut Arab University \newline
P.O.Box 11 - 50 - 20 Riad El Solh 11072809\newline
              Beirut, Lebanon \newline } 
\begin{document}
\date{}

\selectlanguage{english}
\begin{abstract}
We prove three optimal conformal geometric inequalities of Blatter type (\cite{blatter}) on the Klein bottle. These inequalities provide conformal lower bounds of the volume and involve lengths of homotopy classes of curves that are candidates to realize the systole.
\end{abstract}
\medskip

\maketitle

\selectlanguage{english} \noindent {\bf Keywords} : Klein bottle, conformal metric, systole, isosystolic inequality.
\\
\newline
{\bf 2010 MSC} : 53C23, 53C22, 53C20.
\\
\newline

\section{Introduction, preliminaries and results}\label{intro}

Among all Riemannian metrics on a given compact differentiable  manifold, the most interesting ones are those that extremize some Riemannian invariant. An interesting problem, for example, is to study metrics $g$ which maximize the ratio $(sys(M,g))^n/vol(M,g)$, where $sys(M,g)$ is the systole of the $n$-dimensional Riemannian manifold $(M,g)$. Nevertheless, as in many works related to this problem, various constraints can be put on the set of Riemannian metrics considered. For example we may restrict ourselves to the set of constant curvature metrics (\cite{adams}, \cite{sab}, \cite{elmir}) or to the set of metrics conformal to a given one (\cite{blatter}, \cite{bavard2}, \cite{bavard3}, \cite{bavard5}).\\

As far as this work is concerned, we are interested in finding optimal conformal lower bounds of the volume of the Klein bottle (denoted by $K$) by (the product of) the least length of a class of non contractible curves. We will often consider (free) homotopy classes of curves that are candidates to realize the systole.  Such inequalities were proved by Blatter on the Mobius band with boundary (denoted by $M$) in 1961 (\cite{blatter}). Hoping to get a universal inequality on $M$, Blatter considers the family of curves $F$ joining two points on the boundary and going through the central geodesic of the Mobius Band at least one time. If we denote by $l^*$ the least length of a curve that belongs to $F$ and by $l_1$ the least length of a curve closed by the glide reflection generating $\pi_1(M)$, Blatter (here we follow his notations) gets the following  optimal \emph{conformal} lower bound of the volume 

$$l^*(g) l_1(g) \leq C_\beta vol(g) $$

where $C_\beta$ depends only on the conformal type of the metrics under consideration.

He was expecting to get an inequality of this type which holds for any Riemannian metric on the Mobius Band, but he found that this is not true. We will show that a similar result holds for the Klein bottle (Corollary \ref{cor1} below). Note that the family of curves $F$ is a natural set of curves on the Klein bottle since it corresponds to the homotopy class of the vertical translation in $\pi_1(K)$ (see section \ref{klein} below).

\subsection {The Klein bottle} \label{klein}

It is the quotient of
$ \mathbb{R}^2$ by
the group generated by the maps
$\sigma: z\mapsto \bar{z}+\pi$ and $t:z\mapsto z+2i\beta$. The flat metric induced by such a quotient will be denoted by $g_{\beta}$. By the uniformization theorem, we know that every Riemannian metric $g$ on the Klein bottle is conformally equivalent to a flat metric $g_\beta$ for some $\beta \in ]0,+\infty[$. The latter is called the conformal type of the metric $g$.\\

The Klein bottle satisfies the isosystolic inequality

$$   sys(K)^2 \leq \frac{\pi}{2\sqrt{2}} \cdot vol(K)$$

The equality being achieved for a spherical metric outside a singular line  (see \cite{bavard}, \cite{sakai}, \cite{elmir}). For details and (many) open problems in \emph{systolic geometry} see the book of Katz \cite{katz}, the website http://u.math.biu.ac.il/~katzmik/sgt.html, and the "deep" paper of Gromov \cite{gromov}.

We put the emphasis on the fact that the systole of any metric on the Klein bottle is realized by one of the following (free) homotopy class of curves:\\

\begin{enumerate}

\item The homotopy class of (a geodesic closed by) the glide reflection $\sigma$ of $\pi_1(K)$. 

\item The homotopy class of the vertical translation $t$ of $\pi_1(K)$.

\item The homotopy class of the horizontal translation $\sigma^2$ of $\pi_1(K)$.

\end{enumerate}

If $g$ is a Riemannian metric on $K$, the least length of a curve in the first (resp. second, resp. third) class will be denoted by $l_\sigma$ (resp. $l_v$, resp. $l_h$). Note that the extremal metric for the isosystolic inequality on the Klein bottle $(K,g_e)$ verifies $l_{\sigma}(g_e)=l_{v}(g_e)=\pi$. Its conformal type is $2\ln(\tan(\frac{3\pi}{8}))$.\\

We will often use metrics (classical in the subject) of the form $G_b=f^2(v)du^2+dv^2$ for $|u|\leq \frac{\pi}{2}$ and $|v|\leq b$, with $f$ continuous, positive, even and $b$-periodic function. The conformal type of such metric is

$$\beta=\int_0^b{\frac{dt}{f(t)}}$$

When $f$ is equal to the function $"\cos "$ in a cylinder $C_\alpha=\{|v|\leq \alpha\}$ for some $0<\alpha<\pi/2$, we denote by $\gamma_{\theta}^{a}$ the great circle going through the points $(u=\theta-\pi/2,\ v=0)$, $(u=\theta,\ v=a)$ and $(u=\theta+\pi/2,\ v=0)$.

\subsection{The results} The first result of this paper studies optimal conformal  inequalities of the form $l_v l_\sigma \leq C_\beta vol$ (we call it "Geometric Inequality of Type $\sigma-v$"), where $C_\beta$ is a constant depending only on the conformal type $\beta$. It also describes the Riemannian metrics for which the best
constant is achieved:

\begin{theorem}\label{1}
Let $b_{0}$ be such that: $b_{0}\in ]0,\frac{\pi}{2}[$ and $\tan(b_{0}) = 2b_{0}$, then for every Riemannian metric $g$ on the Klein bottle $K$ conformal to $g_{\beta}$ we have
\begin{enumerate}
	\item If $0<{\beta}\leq{2}\ln\left(\tan\left(\frac{\pi}{4}+\frac{b_{0}}{2}\right)\right)$, then
	   \begin{equation*}
l_v(g)l_\sigma(g)\leq\frac{\arcsin\left(\frac{e^{{\beta}}-1}{e^{{\beta}}+1}\right)}{\frac{e^{{\beta}}-1}{e^{{\beta}}+1}}vol(g).	
\end{equation*}
	\item If ${\beta}>{2}\ln\left(\tan\left(\frac{\pi}{4}+\frac{b_{0}}{2}\right)\right)$, then
\begin{equation*}
l_v(g)l_\sigma(g)\leq \frac{1}{2\cos(\omega)} vol(g).
\end{equation*}
where $\omega\in[b_{0},\frac{\pi}{2}[$ is given by \ \  $2\sin(\omega) = \left({\beta}-2\ln\left(\tan\left(\frac{\pi}{4}+\frac{\omega}{2}\right)\right)\right)\cos^{2}(\omega)+4\omega\cos(\omega)$.

\end{enumerate}

Moreover, the equality is achieved by a spherical metric in the first case and a flat-spherical one in the second case.
\end{theorem}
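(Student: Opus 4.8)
The plan is to reduce an arbitrary metric in the conformal class of $g_{\beta}$ to a one–parameter family of models by symmetrisation, and then to solve the resulting one–dimensional variational problem. By the uniformisation statement recalled in Section~\ref{klein} I write $g=\rho^{2}g_{\beta}$ with $\rho>0$ invariant under the deck transformations. The horizontal translation $\sigma^{2}$ generates a circle action on $K$, and I average the conformal factor $\rho$ over its orbits to obtain a revolution metric $\bar g=f(v)^{2}\,du^{2}+dv^{2}$ of the form $G_{b}$, with the same conformal type $\beta$. Three comparisons are needed. By Cauchy--Schwarz the area cannot increase, $vol(\bar g)\le vol(g)$. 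Because the vertical direction is transverse to the averaging, a Jensen-type inequality gives $l_{v}(\bar g)\ge l_{v}(g)$. The delicate one is $l_{\sigma}$: averaging over $u$ preserves the integral of $\rho$ along the central glide curve $\{v=0\}$, so that curve keeps its length, but I must still show that the symmetrised systole in the class of $\sigma$ does not drop, i.e. $l_{\sigma}(\bar g)\ge l_{\sigma}(g)$.

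On a revolution metric the first two quantities are explicit, $vol(\bar g)=\pi\int_{-b}^{b}f\,dv$ and $l_{v}=2b$, so everything is concentrated in $l_{\sigma}$. Using the Clairaut first integral $f^{2}\dot u=p$ of the geodesic flow I would describe every geodesic in the class of $\sigma$: apart from the central circle $\{v=0\}$, of length $\pi f(0)$, there are tilted glide geodesics that descend into the region where $f$ is smaller. The essential subtlety — and the reason the extremal metric turns out to be spherical and not flat — is that the cheap estimate $l_{\sigma}\le\pi f(0)$ cannot be used: the honest value of $l_{\sigma}$ is the minimum over this Clairaut family, and one must decide for which profiles $f$ the central circle is the true minimiser.

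I would then phrase the optimisation of $l_{v}\,l_{\sigma}/vol$ as an isoperimetric problem for $f$ at fixed conformal type $\beta=\int_{0}^{b}dv/f$, with the requirement ``the central circle realises $l_{\sigma}$'' entering as a one–sided constraint. The Euler--Lagrange analysis forces the profile to have constant curvature $+1$, i.e. $f(v)=\cos(v-v_{0})$, on the region where the constraint is inactive, and a flat piece $f\equiv\mathrm{const}$ where it is active. For small $\beta$ the constraint never binds and the extremal is a single spherical profile $f=\cos v$ on $[-b,b]$; then $l_{\sigma}=\pi$, $l_{v}=2b$, $vol=2\pi\sin b$, so the ratio is $b/\sin b$, and this is exactly the constant in case (1) once $b$ is expressed through $\beta$ by $\arcsin\!\big(\tfrac{e^{\beta}-1}{e^{\beta}+1}\big)=b$. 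For large $\beta$ a flat cylinder must be inserted between two spherical caps; matching $f$ and $f'$ across the free boundary, together with the condition that the (now flat) central circle stay the shortest glide geodesic, yields the transcendental threshold $\tan b_{0}=2b_{0}$ and the defining equation for $\omega$, and produces the constant $1/(2\cos\omega)$ of case (2).

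The step I expect to be the main obstacle is the treatment of $l_{\sigma}$ throughout: proving that symmetrisation does not decrease the shortest glide geodesic, and, on the candidate extremals, that the central circle is really the global minimiser in the class of $\sigma$. Controlling the tilted geodesics through the Clairaut integral and showing that the interface between the flat and spherical regions is governed precisely by the stated matching conditions is the technical heart of the argument; the appearance of a flat cylinder beyond the threshold $\beta_{0}=2\ln\tan(\pi/4+b_{0}/2)$ is exactly the signature of this free-boundary phenomenon.
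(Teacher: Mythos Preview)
Your approach is genuinely different from the paper's. The paper never symmetrises and never runs an Euler--Lagrange argument: it uses the extremal-length/calibration criterion (Theorem~\ref{extcond}). One writes down the candidate metric $G_b$ (spherical, or spherical with a flat collar), chooses measures $\mu_1$ on the family of half great circles $\gamma_\theta^a$ and $\mu_2=\cos(\omega)\,du$ on the vertical fibres, and checks the single identity $^*\mu_1+{}^*\mu_2=dG_b$ together with the balance $m_1 l_\sigma(G_b)=m_2 l_v(G_b)$. The latter balance is what forces the relation $\tan\omega=b+\omega$ and hence the threshold $\tan b_0=2b_0$; once it holds, inequality~(\ref{extineq}) gives $l_\sigma(g)l_v(g)\le\frac{1}{2\cos\omega}\,vol(g)$ for \emph{every} $g$ conformal to $G_b$, with no reduction to revolution metrics needed. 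So what you are trying to discover by a free-boundary variational analysis is, in the paper, obtained by guessing the calibrating measures and verifying an identity.

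Two remarks on your sketch itself. First, the step you flag as the main obstacle --- that symmetrisation does not decrease $l_\sigma$ --- is actually routine: if $\gamma$ is any curve in the class of $\sigma$ and $\gamma_s=\gamma+(s,0)$ its horizontal translates (still in the class of $\sigma$ since horizontal translation commutes with $\sigma$), then $L_{\bar g}(\gamma)=\frac{1}{\pi}\int_0^\pi L_g(\gamma_s)\,ds\ge l_\sigma(g)$, and taking the infimum over $\gamma$ gives $l_\sigma(\bar g)\ge l_\sigma(g)$. Second, the step you do \emph{not} flag is the real gap: after symmetrisation you still have to prove a \emph{global} inequality over all profiles $f$ with the given conformal type, and an Euler--Lagrange computation only identifies critical profiles. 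Since $l_\sigma$ is a non-smooth infimum over a family of curves, turning the first-order analysis into a sharp upper bound requires an additional convexity or calibration argument --- and the natural way to supply it is precisely the measure identity of Theorem~\ref{extcond}, at which point the symmetrisation becomes superfluous. Your computation of the constants ($b/\sin b$ in case~(1), $1/(2\cos\omega)$ with $\tan\omega=b+\omega$ in case~(2)) is correct, but it is a computation on the candidates, not a proof that they are extremal.
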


\begin{rem} The inequality in the first case was proved by Bavard in \cite{bavard5}. We complete here this study by obtaining an optimal inequality for the remaining conformal classes.

\end{rem}

\begin{cor} \label{cor1}
There does not exist any (finite) constant $c$ such that the inequality:

$$l_v(g)l_\sigma(g) \leq c \cdot vol(g)$$

holds for every Riemannian metric $g$ on the Klein bottle.

\end{cor}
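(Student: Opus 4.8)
The plan is to derive Corollary \ref{cor1} directly from Theorem \ref{1} by examining the asymptotic behavior of the best constant as the conformal type $\beta$ tends to infinity. The key observation is that the corollary asserts the nonexistence of a \emph{uniform} constant $c$ valid for all metrics, and since every metric on $K$ is conformal to some $g_\beta$, it suffices to show that the best constant $C_\beta$ appearing in Theorem \ref{1} is unbounded as $\beta \to +\infty$. If $C_\beta \to +\infty$, then no finite $c$ can dominate all the $C_\beta$, and because the bounds in Theorem \ref{1} are optimal (achieved by the described extremal metrics), for any candidate $c$ one can choose $\beta$ large enough and an extremal metric $g$ with conformal type $\beta$ so that $l_v(g)l_\sigma(g) = C_\beta\, vol(g) > c\cdot vol(g)$, contradicting the proposed universal inequality.

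The main work, therefore, is to analyze the second case of Theorem \ref{1}, where the constant is $\frac{1}{2\cos(\omega)}$ with $\omega \in [b_0, \frac{\pi}{2})$ determined implicitly by
\begin{equation*}
2\sin(\omega) = \left(\beta - 2\ln\left(\tan\left(\tfrac{\pi}{4}+\tfrac{\omega}{2}\right)\right)\right)\cos^{2}(\omega) + 4\omega\cos(\omega).
\end{equation*}
First I would argue that as $\beta \to +\infty$, the defining relation forces $\omega \to \frac{\pi}{2}^{-}$: indeed, for fixed $\omega$ bounded away from $\frac{\pi}{2}$ the left-hand side stays bounded while the coefficient $\cos^2(\omega)$ of $\beta$ on the right is positive, so the right-hand side would blow up; the only way to keep the equation balanced as $\beta$ grows is to let $\cos(\omega) \to 0$, i.e. $\omega \to \frac{\pi}{2}$. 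Consequently $\cos(\omega) \to 0^{+}$, and hence $C_\beta = \frac{1}{2\cos(\omega)} \to +\infty$. This establishes the unboundedness of the optimal constant.

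To make the argument fully rigorous I would verify that the implicit relation genuinely defines $\omega$ as a (continuous, monotone) function of $\beta$ on the relevant range, so that letting $\beta \to +\infty$ is legitimate and the corresponding extremal metrics exist. This amounts to checking that the map sending $\omega \in [b_0, \frac{\pi}{2})$ to the value of $\beta$ solving the equation is a bijection onto $\big(2\ln(\tan(\frac{\pi}{4}+\frac{b_0}{2})), +\infty\big)$; since the definition of $b_0$ via $\tan(b_0) = 2b_0$ is precisely the threshold matching the two cases of the theorem continuously, the endpoint behavior is consistent. The one point requiring care — and the step I expect to be the main (though modest) obstacle — is confirming that $\beta \to +\infty$ indeed corresponds to $\omega \to \frac{\pi}{2}$ rather than to some finite accumulation, i.e. ruling out that $\beta$ stays bounded along the admissible range of $\omega$; this follows from the explicit form of the relation above, where the $\beta\cos^2(\omega)$ term dominates. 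Once $C_\beta \to +\infty$ is in hand, the corollary is immediate, since a universal finite $c$ would have to satisfy $c \geq C_\beta$ for all $\beta$.
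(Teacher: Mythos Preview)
Your proposal is correct and is precisely the argument the paper intends: Corollary \ref{cor1} is stated without a separate proof because it follows immediately from the second case of Theorem \ref{1}, and your analysis---that the implicit relation forces $\omega\to\frac{\pi}{2}^{-}$ as $\beta\to+\infty$, so the optimal constant $\frac{1}{2\cos(\omega)}$ blows up, while the extremal flat-spherical metrics $G_b$ realize equality---is exactly that deduction made explicit. In the paper's proof of Theorem \ref{1} the relation $\tan(\omega)=b+\omega$ makes the limit $\omega\to\frac{\pi}{2}$ as $b\to\infty$ (hence $\beta\to\infty$) even more transparent, but your direct reading from the $\beta$--$\omega$ relation is equally valid.
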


The second part of the paper is devoted to establishing optimal conformal inequalities of the form $L_\sigma l_h \leq C_\beta vol$, where $L_\sigma$ denotes the least length of a curve freely homotopic to a geodesic loop closed by an element of the subgroup $<\sigma>$ generated by $\sigma$ (this is the systole in the case of the Mobius Band with boundary). We call it "Geometric inequality of type $\sigma^n-v$". Note that since the only candidates of curves for $L_\sigma$ in $<\sigma>$ are $\sigma$ and $\sigma^2$, we have $L_\sigma=\inf\{l_\sigma,l_h\}$.

\begin{theorem} \label{2}
For every Riemannian metric $g$ on the Klein bottle $K$ conformal to $g_{\beta}$, we have
\begin{enumerate}
	\item If $0<{\beta}\leq{2}\ln\left(2+\sqrt{3}\right)$, then
	   \begin{equation*}
L_\sigma(g)l_v(g)\leq\frac{\arcsin\left(\frac{e^{{\beta}-1}}{e^{{\beta}}+1}\right)}{\frac{e^{{\beta}}-1}{e^{{\beta}}+1}}vol(g).	
\end{equation*}
	\item If ${\beta}>{2}\ln\left(2+\sqrt{3}\right)$, then
\begin{equation*}
L_\sigma(g)l_v(g)\leq\frac{2}{3}.\frac{3\beta+4\pi-6\ln(2+\sqrt{3})}{4\sqrt{3}+\beta-2\ln(2+\sqrt{3})}  vol(g).
\end{equation*}
\end{enumerate}
Moreover, the equality is achieved by a spherical metric in the first case and by a flat-spherical metric in the second case.  
\end{theorem}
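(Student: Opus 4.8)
The plan is to establish Theorem \ref{2} by following the same variational strategy that underlies Theorem \ref{1}, which is the paradigm of Blatter-type conformal systolic inequalities. Since every metric $g$ conformal to $g_\beta$ can be written $g = \varphi^2 g_\beta$ for a positive function $\varphi$, and since $L_\sigma$ and $l_v$ are infima of lengths over the relevant free homotopy classes while $vol(g)=\int \varphi^2\, dA_\beta$, the quantity $L_\sigma(g)l_v(g)/vol(g)$ is scale-invariant and depends only on $\beta$. The goal is to maximize the left-hand side over all conformal factors $\varphi$ and show the optimizer has the claimed form. First I would reduce to symmetric metrics: using the standard symmetrization arguments (averaging $\varphi$ over the isometry group of $g_\beta$ and over suitable reflections), one shows the extremal metric may be taken of the model form $G_b = f^2(v)\,du^2 + dv^2$ with $f$ even, positive and $b$-periodic, so that the problem becomes a one-dimensional variational problem in $f$. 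On such a metric the conformal type is $\beta = \int_0^b dt/f(t)$, the length $l_v$ is controlled by the vertical segments, and $L_\sigma = \inf\{l_\sigma, l_h\}$ is controlled by the horizontal curves closed by $\sigma$ or $\sigma^2$, with $l_h$ realized by the circle $\{v = b\}$ of length $\pi f(b)$ (up to normalization of the $u$-range).

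Next I would set up the optimization. After normalizing $l_v$ (say $l_v = \int_{-b}^{b} \sqrt{g_{vv}}$ fixed, or rescaling so that $l_v$ equals a convenient constant), the problem is to minimize $vol = \int\!\!\int f^2\, du\, dv$ subject to the constraint on $\beta$ and the lower bounds coming from $L_\sigma$ and $l_v$. The Euler--Lagrange analysis of this isoperimetric-type problem forces $f$ to satisfy an ODE whose solutions are exactly the building blocks already present in the paper: where the length constraints are \emph{active} the optimal $f$ is a cosine arc (giving a piece of the round sphere of curvature $+1$, via $f = \cos$ on a cylinder $C_\alpha$), and where a constraint is \emph{inactive} the optimizer is flat, $f \equiv \mathrm{const}$. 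This produces the flat--spherical metrics announced in the statement: for small $\beta$ the purely spherical cap saturates everything (case (1)), while past the threshold a flat cylindrical band must be inserted (case (2)).

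The decisive computation is locating the threshold and evaluating the constant. For the $\sigma^n$--$v$ inequality the relevant spherical geodesics are the great circles $\gamma_\theta^a$ introduced in \S\ref{klein}, and the critical curves closing up under $\sigma^2$ (the horizontal class) rather than $\sigma$ change the balance of lengths; matching the cosine cap to the flat band at its boundary and imposing that both $L_\sigma$ and $l_v$ are simultaneously realized pins down the gluing parameter. Working this out, the threshold is $\beta = 2\ln(2+\sqrt{3})$ — note that $2+\sqrt3 = \tan(\tfrac{\pi}{4}+\tfrac{\pi}{6})$ corresponds to the half-angle $\pi/6$, reflecting that the extremal spherical cap here subtends a $60^\circ$ arc rather than the $b_0$-determined arc of Theorem \ref{1}. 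In the subcritical range the extremal cap is purely spherical and one recovers the same functional form $\arcsin(x)/x$ with $x = (e^\beta-1)/(e^\beta+1)$ as in Theorem \ref{1}(1); in the supercritical range one computes $vol$, $L_\sigma$ and $l_v$ for the glued flat--spherical metric as explicit functions of $\beta$ (the flat band contributing linearly in $\beta$), yielding the ratio
\begin{equation*}
\frac{2}{3}\cdot\frac{3\beta + 4\pi - 6\ln(2+\sqrt{3})}{4\sqrt{3} + \beta - 2\ln(2+\sqrt{3})}.
\end{equation*}

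The main obstacle, as in all such problems, is the reduction and rigidity step rather than the final algebra: one must prove that the symmetrization does not decrease the ratio (so that restricting to the model metrics $G_b$ loses no generality), and that the candidate flat--spherical profile is a genuine \emph{maximum} and not merely a critical point. Concretely, the hard part will be verifying that the infimum defining $L_\sigma=\inf\{l_\sigma,l_h\}$ is achieved by the expected family of curves on the optimizer — one must rule out that some competing curve becomes shorter after the flat band is inserted, which is exactly where the $\sigma^2$ versus $\sigma$ distinction and the threshold $2\ln(2+\sqrt3)$ enter. I would handle this by the same comparison/coarea estimates used to prove Theorem \ref{1}, checking the length of each candidate class on the model metric directly and confirming that the flat--spherical optimizer simultaneously saturates the bounds on $L_\sigma$ and $l_v$.
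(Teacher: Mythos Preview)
Your outline identifies the correct extremal metrics and the correct threshold, but the route you propose is genuinely different from the paper's, and the step you yourself flag as the main obstacle is precisely the one the paper's method bypasses.

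The paper does not proceed by symmetrization followed by a one-dimensional Euler--Lagrange analysis. It applies the calibration criterion of Theorem~\ref{extcond}: for case~(2) one writes down the candidate metric $H_b$ explicitly (spherical with $f=\cos$ on $|v|\le\pi/3$, flat with $f\equiv\tfrac12$ on the remaining band), chooses three families of curves (the great circles $\gamma_\theta^a$ in the spherical caps, the horizontal lines $\delta_a$ in the flat band, and the vertical lines $\gamma_u$), and places explicit measures $\mu_1,\mu_3,\mu_2$ on them so that ${}^*\mu_1+{}^*\mu_2+{}^*\mu_3=dH_b$. Once that identity is checked, the proof of Theorem~\ref{extcond} gives directly
\[
m_1\, L_\sigma(g)+m'\, l_v(g)\le \sqrt{vol(g)\,vol(H_b)}
\]
for \emph{every} $g$ conformal to $H_b$, with no symmetrization, no reduction to rotationally invariant conformal factors, and no second-variation test. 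The arithmetic--geometric mean converts the sum into the product, and the balance condition $m_1 L_\sigma(H_b)=m' l_v(H_b)$ fixes the free parameter $m'$ and yields the stated constant. Case~(1) is then observed to follow from the corresponding case of Theorem~\ref{1}, since on the purely spherical extremal one has $l_h>l_\sigma$ for $b<\pi/3$ and hence $L_\sigma=l_\sigma$.

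Your plan, by contrast, asks you to show that averaging the conformal factor over $u$ does not decrease $L_\sigma l_v/vol$, and then that the flat--spherical profile is a global maximum among all admissible $f$. Neither step is routine: the functional $L_\sigma=\min\{l_\sigma,l_h\}$ is non-smooth, so an Euler--Lagrange equation alone will not detect the switch at $b=\pi/3$, and symmetrization of $\varphi$ can a~priori move different curve lengths in opposite directions. When you say you would handle the rigidity ``by the same comparison/coarea estimates used to prove Theorem~\ref{1}'', note that Theorem~\ref{1} is itself proved via the measure criterion of Theorem~\ref{extcond}, not by symmetrization or direct variational analysis, so there is no such argument in the paper for you to borrow.
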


Since the supremum of the conformal constant $C_\beta$ over $\beta $ is equal to $2$ we have

\begin{cor}
For every Riemannian metric $g$ on the Klein bottle $K$ we have
\begin{equation*}
{L_{\sigma}(g)l_{v}(g)} \leq {2} {vol(g)}
\end{equation*}
Though this inequality is optimal, the equality case is not achieved. 

\end{cor}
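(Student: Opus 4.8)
The plan is to deduce this corollary directly from Theorem \ref{2} by analyzing the conformal constant $C_\beta$ as a function of $\beta$ and showing that its supremum over $\beta\in\,]0,+\infty[$ equals $2$ but is never attained. First I would record the two expressions supplied by Theorem \ref{2}: for $0<\beta\le 2\ln(2+\sqrt{3})$ one has $C_\beta=\arcsin(x)/x$ with $x=\frac{e^\beta-1}{e^\beta+1}=\tanh(\beta/2)$, while for $\beta>2\ln(2+\sqrt{3})$ one has $C_\beta=\frac{2}{3}\cdot\frac{3\beta+4\pi-6\ln(2+\sqrt{3})}{4\sqrt{3}+\beta-2\ln(2+\sqrt{3})}$. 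Since every Riemannian metric on $K$ has a well-defined finite conformal type $\beta$, the universal statement $L_\sigma l_v\le 2\,vol$ will follow once I show $C_\beta\le 2$ for all $\beta$, and the non-attainment and optimality will follow from a sharper monotonicity analysis.

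Next I would establish that $\beta\mapsto C_\beta$ is continuous and strictly increasing. On the first interval, writing $x=\tanh(\beta/2)\in\,]0,\tanh(\ln(2+\sqrt{3}))[$, the map $x\mapsto\arcsin(x)/x$ is strictly increasing on $]0,1[$ (its derivative has numerator $\frac{x}{\sqrt{1-x^2}}-\arcsin x$, which vanishes at $0$ and has positive derivative $\frac{1}{(1-x^2)^{3/2}}-\frac{1}{\sqrt{1-x^2}}$), and $\beta\mapsto x$ is increasing, so $C_\beta$ is increasing here. On the second interval, writing $A=4\pi-6\ln(2+\sqrt{3})$ and $B=4\sqrt{3}-2\ln(2+\sqrt{3})$, I have $C_\beta=\frac{2}{3}\cdot\frac{3\beta+A}{\beta+B}$, whose derivative has the sign of $3B-A=12\sqrt{3}-4\pi>0$; hence $C_\beta$ is increasing here as well. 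For continuity at the junction $\beta_0=2\ln(2+\sqrt{3})$ I would check that both formulas agree: there $x=\frac{\sqrt{3}}{2}$ so $\arcsin(x)/x=\frac{\pi/3}{\sqrt{3}/2}=\frac{2\pi}{3\sqrt{3}}$, and the second formula gives $\frac{2}{3}\cdot\frac{4\pi}{4\sqrt{3}}=\frac{2\pi}{3\sqrt{3}}$ as well.

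Then the decisive computation is the limit at infinity: since $\frac{3\beta+A}{\beta+B}\to 3$ as $\beta\to+\infty$, we get $C_\beta\to\frac{2}{3}\cdot 3=2$. Combined with the strict monotonicity this shows $C_\beta<2$ for every finite $\beta$ while $\sup_\beta C_\beta=2$. I would conclude as follows. For an arbitrary metric $g$ of conformal type $\beta$, Theorem \ref{2} gives $L_\sigma(g)l_v(g)\le C_\beta\,vol(g)<2\,vol(g)$, which proves the inequality and shows equality is never achieved (since $C_\beta<2$ strictly for all $\beta$). Optimality follows because, for each $\varepsilon>0$, choosing $\beta$ large enough that $C_\beta>2-\varepsilon$ and taking the corresponding flat-spherical extremal metric from Theorem \ref{2} produces a metric with ratio $L_\sigma l_v/vol=C_\beta>2-\varepsilon$.

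The argument is essentially elementary calculus, so I do not anticipate a genuine obstacle; the only points requiring care are the monotonicity of the first-interval constant $\arcsin(x)/x$ and the sign of $3B-A=12\sqrt{3}-4\pi$, both of which must come out so that $C_\beta$ approaches its supremum $2$ from strictly below — this is exactly what forces the equality case to be unattainable.
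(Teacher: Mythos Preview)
Your proposal is correct and follows exactly the route the paper indicates: the paper merely states before the corollary that ``the supremum of the conformal constant $C_\beta$ over $\beta$ is equal to $2$'' and leaves the verification to the reader, while you supply precisely that verification (monotonicity of $\arcsin(x)/x$, the sign of $12\sqrt{3}-4\pi$, continuity at $\beta_0=2\ln(2+\sqrt{3})$, and the limit $2$ at infinity). There is nothing to add; your argument is the intended one, carried out in full detail.
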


In the third part we establish optimal conformal inequalities of the form $l_h l_v l_\sigma \leq C_\beta vol^{\frac{3}{2}}$ (Geometric Inequality of Type $\sigma-v-h$). This inequality has the advantage that the volume (power $\frac{3}{2}$ to get a homogeneous invariant) is bounded by all three candidates of the systole:

\begin{theorem}\label{3}
For every Riemannian metric $g$ on the Klein bottle $K$ conformal to $g_{\beta}$, we have the following inequality
\begin{equation*}
l_{\sigma}(g)l_{v}(g)l_{h}(g) \leq \frac{\sqrt{\pi}}{3\sqrt{3}}\cdot \frac{\left(b^{4}-4b\omega+\omega^{2}+\omega^{4}-2b^{2}(-2+\omega^{2})\right)^{\frac{1}{4}}(2b-\omega)}{(b-\omega)\sqrt{(b-\omega)b}}  vol(g)^{\frac{3}{2}}.	
\end{equation*}
Where $\omega\in]0,\frac{\pi}{2}[$ is given by 
$$\beta=2\ln(\tan(\frac{\pi}{4}+\frac{\omega}{2}))+\frac{2}{\cos(\omega)}\left(\tan(\omega)-\omega+\sqrt{\tan^{2}(\omega)-\omega\tan(\omega)+\omega^{2}}\right)$$ 
and $b=\tan(\omega)+\sqrt{\tan^{2}(\omega)-\omega\tan(\omega)+\omega^{2}}$. Moreover, the equality is achieved by a flat-spherical metric.
\end{theorem}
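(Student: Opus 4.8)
The plan is to follow the symmetrization scheme underlying the Blatter--Bavard method (\cite{bavard5}), which presumably already governs the proofs of Theorems \ref{1} and \ref{2}, and then to push the reduction one step further so that the extremal metric is forced to be flat--spherical. First I would reduce an arbitrary metric $g=\rho^{2}g_{\beta}$ to a metric of revolution $G_{b}=f^{2}(v)\,du^{2}+dv^{2}$: replacing the conformal factor $\rho(u,v)$ by its average $\bar{\rho}(v)$ over the $u$--circle preserves the conformal type $\beta$, cannot increase the volume (Cauchy--Schwarz), and cannot decrease any of $l_{v},l_{\sigma},l_{h}$, since the latitude at each height keeps its length while the systolic curves may only lengthen. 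Hence the scale--invariant ratio $l_{\sigma}l_{v}l_{h}/vol^{3/2}$ is largest on revolution metrics, and it is enough to treat those.

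For a revolution metric I would next write down the four quantities explicitly. The volume is $vol=2\pi\int_{0}^{b}f\,dv$, the meridian gives $l_{v}=2b$, and the shortest latitude (in the class of $\sigma^{2}$) gives $l_{h}$ proportional to $\min f$. The delicate term is $l_{\sigma}$: a curve closed by the glide reflection joins $(u_{0},v_{0})$ to $(u_{0}+\pi,-v_{0})$, so its shortest representative is a geodesic symmetric about the equator. I would compute its length through Clairaut's relation $f(v)\sin\psi(v)=\mathrm{const}$; on a region where $f=\cos v$ these geodesics are exactly the great circles $\gamma_{\theta}^{a}$, which connect antipodal points and therefore contribute the universal value $\pi$. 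This is the only genuinely nonlocal functional of $f$, and controlling it is where the real work lies.

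The inequality then becomes the maximization of $l_{\sigma}l_{v}l_{h}/vol^{3/2}$ over admissible profiles $f$ with $\int_{0}^{b}dv/f$ fixed (the ratio being scale invariant). I expect the Euler--Lagrange analysis to split the interval $[0,b]$ into a region where the constraint is inactive, forcing constant curvature $+1$, i.e.\ $f''+f=0$ and $f=\cos v$, and a region where a length constraint is saturated, forcing $f$ constant (flat). Together with evenness and the matching at the singular latitude, this singles out the flat--spherical profile $f=\cos v$ on $[0,\omega]$ and $f\equiv\cos\omega$ on $[\omega,b]$. Its conformal type is $\beta=2\ln\tan(\tfrac{\pi}{4}+\tfrac{\omega}{2})+\tfrac{2}{\cos\omega}(\tan\omega-\omega+\sqrt{\tan^{2}\omega-\omega\tan\omega+\omega^{2}})$, which determines $\omega$ from $\beta$, while the criticality condition balancing the three lengths yields the companion relation $b=\tan\omega+\sqrt{\tan^{2}\omega-\omega\tan\omega+\omega^{2}}$.

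Finally I would substitute this profile into the explicit expressions for $l_{\sigma},l_{v},l_{h}$ and $vol=2\pi(\sin\omega+(b-\omega)\cos\omega)$, observe the simplification $b^{4}-4b\omega+\omega^{2}+\omega^{4}-2b^{2}(\omega^{2}-2)=(b^{2}-\omega^{2})^{2}+(2b-\omega)^{2}$, and read off the stated constant, equality being automatic by construction. The main obstacle is twofold: first, $l_{\sigma}$ is a global quantity determined by a minimizing glide geodesic, so I must identify precisely which geodesic realizes it and how its length varies with $f$; and second, I must prove that the flat--spherical profile is a genuine \emph{global} maximum of the ratio, not merely a critical point. I would handle the latter by a comparison (rearrangement) argument showing that, at fixed conformal type, replacing any admissible profile by the flat--spherical one does not decrease the ratio, thereby reducing the whole statement to the explicit one--parameter computation in $\omega$.
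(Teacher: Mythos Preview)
Your route is genuinely different from the paper's. The paper never symmetrizes and never runs an Euler--Lagrange analysis on profiles $f$. Instead it applies the calibration criterion stated earlier (the maximality Theorem~\ref{extcond}): on the candidate flat--spherical metric $E_{b}$ it places explicit measures on three families of curves --- the great circles $\gamma_{\theta}^{a}$ in the spherical caps, the vertical lines $\gamma_{u}$, and the horizontal lines $\delta_{a}$ in the flat collar --- and checks that the dual measure $^{*}\mu$ equals $dE_{b}$. The balancing conditions $m_{1}l_{\sigma}(E_{b})=m_{2}l_{v}(E_{b})=m_{3}l_{h}(E_{b})$ produce two equations for $m'$; equating them gives $\tan\omega=(b^{2}-\omega^{2})/(2b-\omega)$, whence $b=\tan\omega+\sqrt{\tan^{2}\omega-\omega\tan\omega+\omega^{2}}$, and the constant drops out of~\eqref{extineq} with $p=3$. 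Optimality is then a one-line consequence of Cauchy--Schwarz plus AM--GM, with no need to know $l_{\sigma}$ for any metric other than $E_{b}$ itself.

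Your symmetrization step is fine: averaging the conformal factor in $u$ keeps the conformal class, lowers the volume by Jensen, and the translate-and-average trick shows each $l_{\bullet}$ can only go up. The gap is in the second half. You propose to maximize $l_{\sigma}l_{v}l_{h}/vol^{3/2}$ over all revolution profiles $f$ with $\int dv/f=\beta$, but $l_{\sigma}(f)$ is a nonlocal, non-smooth functional (the infimum of lengths of glide geodesics, which via Clairaut depends on $f$ through an implicit equation), so an Euler--Lagrange argument would first require differentiating through that minimum and identifying the active geodesic for a \emph{general} $f$, not just for $f=\cos$. Your closing ``comparison (rearrangement) argument'' is asserted, not supplied, and it is not clear what monotone rearrangement would simultaneously control $l_{\sigma}$, $l_{h}=\min f$, and $vol$. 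This is exactly the difficulty the paper's calibration method is designed to sidestep: once the measures are exhibited, global maximality in the conformal class follows immediately, and one never has to analyze geodesics on a general surface of revolution.
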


\begin{rem}

If we replace $b$ by $\tan(\omega)+\sqrt{\tan^{2}(\omega)-\omega\tan(\omega)+\omega^{2}}$ in $$C(\omega)=\frac{\left(b^{4}-4b\omega+\omega^{2}+\omega^{4}-2b^{2}(-2+\omega^{2})\right)^{\frac{1}{4}}(2b-\omega)}{(b-\omega)\sqrt{(b-\omega)b}}$$ 

we see that $C:]\frac{2}{\pi},+\infty[\rightarrow ]\frac{2\sqrt{\pi}}{3\sqrt{3}},+\infty[$ is a continuous increasing onto function and then the supremum of $C$ is $+\infty$ as $\omega\rightarrow 0$  (i.e. when $b\rightarrow 0$). Therefore we get the surprising corollary:

\end{rem}

\begin{cor} There does not exist any (finite) constant $c$ such that the inequality

$$l_v(g)l_\sigma(g) l_{h}(g) \leq c \cdot vol(g)^{\frac{3}{2}}$$

holds for every Riemannian metric $g$ on the Klein bottle $K$.

\end{cor}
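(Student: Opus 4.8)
The plan is to deduce this directly from Theorem \ref{3} together with the asymptotic analysis of the constant announced in the preceding remark. The essential observation is that Theorem \ref{3} is \emph{sharp}: for each conformal type $\beta$ (equivalently, for each $\omega\in]0,\frac{\pi}{2}[$ via the stated relation between $\beta$ and $\omega$) equality is realized by an explicit flat-spherical metric. Consequently the best constant for the conformal class $\beta$ is exactly $\frac{\sqrt{\pi}}{3\sqrt{3}}C(\omega)$. If a finite universal constant $c$ were to satisfy $l_\sigma l_v l_h\le c\,vol^{3/2}$ for every metric on $K$, then testing it against each of these extremal metrics (on which $vol>0$, so we may divide) would force $\frac{\sqrt{\pi}}{3\sqrt{3}}C(\omega)\le c$ for all $\omega\in]0,\frac{\pi}{2}[$. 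Thus it suffices to prove that $\sup_{\omega}C(\omega)=+\infty$.

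To establish this I would study the behaviour of $C(\omega)$ as $\omega\to 0$. First I would expand $b=b(\omega)$: writing $\tan\omega=\omega+\frac{\omega^{3}}{3}+O(\omega^{5})$ gives $\tan^{2}\omega-\omega\tan\omega+\omega^{2}=\omega^{2}+\frac{\omega^{4}}{3}+O(\omega^{6})$, whence $\sqrt{\tan^{2}\omega-\omega\tan\omega+\omega^{2}}=\omega+O(\omega^{3})$ and therefore $b=2\omega+O(\omega^{3})$. In particular $b-\omega\sim\omega$ and $2b-\omega\sim 3\omega$.

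Next I would track the leading order of the quartic expression under the fourth root. The key point is a cancellation among the quadratic terms: with $b\sim 2\omega$ one has $-4b\omega+\omega^{2}+4b^{2}\sim -8\omega^{2}+\omega^{2}+16\omega^{2}=9\omega^{2}$, while the remaining summands $b^{4}$, $\omega^{4}$, $-2b^{2}\omega^{2}$ are of order $\omega^{4}$. Hence the bracket is $\sim 9\omega^{2}$ and its fourth root is $\sim\sqrt{3}\,\omega^{1/2}$. Combining these estimates,
\begin{equation*}
C(\omega)\;\sim\;\frac{\sqrt{3}\,\omega^{1/2}\cdot 3\omega}{\omega\cdot\sqrt{\omega\cdot 2\omega}}\;=\;\frac{3\sqrt{3}}{\sqrt{2}}\,\omega^{-1/2}\xrightarrow[\omega\to 0]{}+\infty .
\end{equation*}
This shows that the best conformal constant $\frac{\sqrt{\pi}}{3\sqrt{3}}C(\omega)$ is unbounded, contradicting the existence of a finite $c$ and proving the corollary.

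I expect the only delicate point to be the asymptotics under the fourth root: one must verify that the $\omega^{2}$-terms do not cancel completely (they sum to $9\omega^{2}$, not $0$), since a full cancellation there would change the rate and invalidate the argument. Everything else is a routine Taylor expansion. An equivalent and perhaps cleaner route, bypassing explicit asymptotics, is simply to invoke the monotonicity and surjectivity of $C$ from the preceding remark, which already yield $\sup C=+\infty$; the computation above is precisely what justifies that remark and pins down the blow-up rate $\omega^{-1/2}$ of the optimal constant as $\beta\to 0$.
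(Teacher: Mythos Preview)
Your argument is correct and follows the same route as the paper: the paper's ``proof'' of this corollary is precisely the preceding Remark, which asserts (without computation) that the conformal constant $C(\omega)$ is unbounded, blowing up as $\omega\to 0$; you supply the Taylor-expansion details that substantiate this claim. Your asymptotics are accurate (in particular $b\sim 2\omega$, the quartic $\sim 9\omega^{2}$, and $C(\omega)\sim \tfrac{3\sqrt{3}}{\sqrt{2}}\,\omega^{-1/2}$), and the logical reduction via sharpness of Theorem~\ref{3} is exactly the intended one.
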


\section{A maximality criterion}

The proofs of our results are based on the method of extremal
lengths initiated by Jenkins in \cite{jenk}, Gromov in \cite{gromov}, and Bavard in \cite{bavard3}.\\

Let $(M,g_e)$ be a compact Riemannian manifold, and let for each $i\in \{1,\cdots,p\}$, $S_i$ be a set of rectifiable curves, such that $S_{i}\cap S_{j}=\emptyset$ for all $ i\neq j$. We denote by $l_i(g)$ the least length of a curve in $S_i$ with respect to a Riemannian metric $g$ $M$ conformal to $g_e$.\\

For every Radon measure $\mu$ on $\Gamma=S_1\cup \cdots \cup S_p$, we associate a measure $^* \mu $ on $M$ by setting for $\varphi \in C^0(M,\mathbb{R})$:

$$ <^* \mu, \varphi>= <\mu,\overline{\varphi}>$$

where $\overline{\varphi}(\gamma)=\int{\varphi \circ \gamma (s)
ds}$, $ds$ is the arc length of $\gamma$ with respect to $g_e$.\\


\begin{theorem} \label{extcond} (\cite{bavard2},\cite{bavard3} and \cite{jenk})

The Riemannian metric $g_e$ is maximal in its conformal class with respect to the Riemannian ratio $\frac{l_1\cdots l_p}{Vol^\frac{p}{2}}$ if there exists a positive measure $\mu$ on
$\Gamma$ such that $m_1l_1(g_{e})=m_2l_2(g_{e})=\cdots=m_pl_p(g_{e})$ and

$$^*\mu= d g_e$$

where, $m_{i}$ is the mass of the measure on $S_{i}$, and $dg_e$ is the volume measure of $(M,g_{e})$.

\end{theorem}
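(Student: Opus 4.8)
The plan is to prove this by a calibration-type argument, comparing an arbitrary conformal metric to $g_e$ through three successive inequalities that each turn into an equality at $g_e$. Since $M$ is a surface (as in our setting), I write any metric conformal to $g_e$ as $g=h^2 g_e$ with $h\in C^0(M,\,]0,+\infty[)$. Then for $\gamma\in S_i$ its $g$-length is $\overline h(\gamma)=\int_\gamma h\,ds$, where $ds$ is the $g_e$-arclength, so that $l_i(g)=\inf_{\gamma\in S_i}\overline h(\gamma)$, while $vol(g)=\int_M h^2\,dg_e$. The whole argument then consists in exploiting the defining relation of ${}^*\mu$ with $\varphi=h$.

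First I would apply $^*\mu=dg_e$ to the continuous function $h$ and use $\langle {}^*\mu,h\rangle=\langle\mu,\overline h\rangle$, which gives
$$\int_M h\,dg_e=\int_\Gamma \overline h(\gamma)\,d\mu(\gamma)=\sum_{i=1}^p\int_{S_i}\Big(\int_\gamma h\,ds\Big)d\mu.$$
Because $\mu\ge 0$ and $\int_\gamma h\,ds\ge l_i(g)$ for every $\gamma\in S_i$, the right-hand side is at least $\sum_i m_i l_i(g)$. Next, the AM--GM inequality gives $\sum_i m_i l_i(g)\ge p\big(\prod_i m_i l_i(g)\big)^{1/p}$, while the Cauchy--Schwarz inequality gives $\int_M h\,dg_e\le \big(\int_M h^2\,dg_e\big)^{1/2}\big(\int_M 1\,dg_e\big)^{1/2}=vol(g)^{1/2}vol(g_e)^{1/2}$. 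Chaining these and raising to the power $p$ yields
$$\frac{l_1(g)\cdots l_p(g)}{vol(g)^{p/2}}\le \frac{vol(g_e)^{p/2}}{p^p\prod_i m_i},$$
a bound independent of $g$, hence uniform over the entire conformal class.

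It then remains to check that this universal constant is exactly the value of the ratio at $g_e$, and this is where the two hypotheses enter for sharpness. Taking $h\equiv 1$: the Cauchy--Schwarz step is an equality because $h$ is constant, and the AM--GM step is an equality precisely because the masses are calibrated so that $m_1 l_1(g_e)=\cdots=m_p l_p(g_e)$. The remaining equality, namely $vol(g_e)=\int_M 1\,dg_e=\sum_i m_i l_i(g_e)$, amounts to $\int_{S_i}\big(\int_\gamma ds - l_i(g_e)\big)\,d\mu=0$ for each $i$; since the integrand is nonnegative this forces $\mu$ to be carried by curves realizing $l_i(g_e)$. Granting this, a short computation (writing $a:=m_i l_i(g_e)$ and $vol(g_e)=pa$) shows $\frac{vol(g_e)^{p/2}}{p^p\prod_i m_i}=\frac{l_1(g_e)\cdots l_p(g_e)}{vol(g_e)^{p/2}}$, so the displayed bound is attained at $g_e$ and $g_e$ maximizes the ratio in its conformal class.

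I expect the genuine obstacle to be this equality (calibration) discussion rather than the inequality chain. The inequality part is a routine combination of the pushforward identity for $^*\mu$, positivity of $\mu$, AM--GM, and Cauchy--Schwarz. The content is that the three equality conditions --- constancy of $h$, the mass balance $m_i l_i(g_e)$ constant, and concentration of $\mu$ on $g_e$-length-minimizing curves of each $S_i$ --- are mutually compatible and realized exactly at $g_e$, so that the a priori bound $\frac{vol(g_e)^{p/2}}{p^p\prod_i m_i}$ is genuinely the supremum of the ratio and is achieved by $g_e$.
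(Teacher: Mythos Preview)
Your argument is essentially the same as the paper's: write the conformal factor, apply the identity $^*\mu=dg_e$ to it, bound below by $\sum_i m_i l_i(g)$ using positivity of $\mu$, apply AM--GM, and bound above by $\sqrt{vol(g)\,vol(g_e)}$ via Cauchy--Schwarz, arriving at the same inequality $p^p(\prod_i m_i)\prod_i l_i(g)\le (vol(g)\,vol(g_e))^{p/2}$. Your discussion of the equality case is in fact more careful than the paper's, which simply asserts equality iff $g$ is proportional to $g_e$; you correctly isolate the additional implicit requirement that $\mu$ be supported on $g_e$-length-minimizing curves of each $S_i$ (automatically satisfied in all the paper's applications, where the measures are built on families of geodesics of common length).
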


Since our version of theorem \ref{extcond} is not the same as in the references we gave, we give here a proof of it.

\begin{proof}

Let $g$ be a Riemannian metric conformal to $g_e$, $(g=\phi g_{e})$, we have
\begin{align*} \label{ineq1}
m_1l_1(g)+\cdots + m_pl_p(g)&\leq\int_{S_{1}}\bar{\phi}(\gamma)d\mu(\gamma)+\cdots+\int_{S_{p}}\bar{\phi}(\gamma)d\mu(\gamma)\\
&= \int_{\Gamma}\bar{\phi}(\gamma)d\mu(\gamma)\\
&=\int_{M}\phi(u,v)d\mu^{*}(u,v)\\
&=\int_{M}\phi(u,v)dg_{e}\\
&\leq\left(\int_{M}\phi^{2}(u,v)dg_{e}\right)^{\frac{1}{2}}\left(\int_{M}dg_{e}\right)^{\frac{1}{2}}\\
&=\sqrt{vol(g)vol(g_{e})}.
\end{align*}

Using the inequality of arithmetic and geometric means we get

$$p\big(m_1\cdots m_p l_1(g)\cdots l_p(g) \big)^{\frac{1}{p}}\leq  m_1l_1(g)+\cdots+m_pl_p(g) $$

with equality if and only if 

$$m_1l_1(g)=m_2l_2(g)=\cdots=m_pl_p(g)$$

Finally, combining the two inequalities we get that, under the required conditions, the following inequality holds

\begin{equation} \label{extineq}
p^{p}\big(m_1\cdots m_p\big) l_1(g)\cdots l_p(g) \leq  \big(vol(g)vol(g_{b})\big)^\frac{p}{2},
\end{equation}

with equality if and only if $g$ is proportional to $g_{e}$.
\end{proof}

\section{Geometric Inequality of type $\sigma-v$ on the Klein bottle}

In the following, we prove geometric inequality of type $\sigma-v$. We use the notations introduced in section \ref{intro}.

\begin{proof}[Proof of theorem \ref{1}]
The inequality in the first case was proved by C.Bavard in \cite{bavard5} (corollary 2). For each $\beta < {2}\ln\left(\tan\left(\frac{\pi}{4}+\frac{b_{0}}{2}\right)\right)$ the extremal metric is 
$$G_b'=f^{2}(v)du^{2}+dv^{2}:\ |v|\leq 2b,\ |u|\leq \pi/2$$ 

where $f$ is a function invariant by the translation $v \mapsto v+2b$ and equal to $\cos $ in $|v|\leq b$. Here we have $\beta=2\ln(\tan(\frac{\pi}{4}+\frac{b}{2}))$.\\

To prove the case when ${\beta}>{2}\ln\left(\tan\left(\frac{\pi}{4}+\frac{b_{0}}{2}\right)\right)$, we endow $K$ with the metric $G_{b}$:  
\begin{equation*}
G_{b}(u,v) = \left\{ \begin{array}{l l} \cos^{2}(v)du^{2}+dv^{2} & \quad
\text{if \quad $|v|\leq\omega$ or $2b-\omega\leq|v|\leq2b$}\\ \cos^{2}(\omega)du^{2}+dv^{2} & \quad \text{if \quad $\omega\leq|v|\leq 2b-\omega $}\\ \end{array} \right.
\end{equation*}

where $b\geq b_{0}$. \\

We aim to find $\omega$ ($b_{0}\leq\omega\leq b$) so that $G_b$ verifies the conditions of theorem \ref{extcond}.
 
We consider the two families of curves $:$
\begin{equation*}
S_{1} = \left\{\gamma_{\theta}^{a}: |a|\leq \omega \textrm{ or } 2b-\omega\leq |a|\leq 2b, \theta\in \mathbb{R}/\pi \mathbb{Z}\right\},	
\end{equation*}

\begin{equation*}
S_{2} = \left\{\gamma_{u}: u\in \mathbb{R}/\pi \mathbb{Z}\right \}	
\end{equation*}
where $\gamma_{u}(t) = (u, t)$ and $|t|\leq 2b $.\\ 

We endow $S_{1}$ with the measure $\mu_{1}$ where,  
\begin{equation*}
\mu_{1} =
\left\{ \begin{array}{l l} h(a)da\otimes d\theta & \quad \text{if}\quad a\geq0\\ h(-a)da\otimes d\theta & \quad \text{if}\quad a<0\\ \end{array} \right.
\end{equation*}
On $S_{2}$, we put the measure $\mu_{2}$, where 
\begin{equation*}
\mu_{2} = \cos(\omega)du	
\end{equation*}
The condition $^*\mu_1+^*\mu_2=dG_b$ gives
\begin{equation*}
h(a) = \frac{\sin(a)}{\pi\cos(a)}\sqrt{\cos^{2}(a)-\cos^{2}(\omega)}.
\end{equation*}
(we used Abel integration equation, see \cite{bavard2} and \cite{pu} p. 66  for some details on the calculations).

Next, since $l_\sigma(G_b)=\pi$ and $l_v(G_b)=4b$ the condition $m_1l_\sigma(G_b)=m_2l_v(G_b)$ gives the relation

\begin{equation}\label{omega}
\tan(\omega)= b + \omega.	
\end{equation}

For any $b\geq {b_{0}}$, the last equation has a unique solution  $\omega\in[b_{0},\frac{\pi}{2}[$. Moreover, it can be checked easily that $\omega$ is related to ${\beta}$ by
\begin{equation*}
2\sin(\omega) = \left(\frac{\beta}{2}-2\ln\left(\tan\left(\frac{\pi}{4}+\frac{\omega}{2}\right)\right)\right)\cos^{2}(\omega)+4\omega\cos(\omega),	
\end{equation*}

Now,  the volume of $G_b$ is equal to
\begin{align*}
vol(G_{b}) &= 2\int^{\frac{\pi}{2}}_{-\frac{\pi}{2}}\int^{\omega}_{-\omega}\cos(v)dvdu	+2\int^{\frac{\pi}{2}}_{-\frac{\pi}{2}}\int^{2b-\omega}_{\omega}\cos(\omega)dvdu\\ &= 4\pi\sin(\omega)+4\pi(b-\omega)\cos(\omega).
\end{align*}

Finally, from inequality \ref{extineq} and for $\omega$ solution of equation \ref{omega} we get the inequality 
\begin{equation*}
l_v(g)l_\sigma(g)\leq\frac{1}{2\cos(\omega)}vol(g)	
\end{equation*}
which holds for every Riemannian metric $g$ conformal to $G_{b}$. The equality is achieved if and only if $g$ is proportional to $G_{b}$, and the result follows.
  
\end{proof}

We denote by $M$ be the Mobius band with boundary obtained by taking the quotient of $\mathbb{R}\times [-\beta,\beta]$ by the group generated by the map $\sigma: z\mapsto \bar{z}+\pi$. We also denote by $g_\beta$ the flat metric induced by such quotient.\\

\begin{rem} The same method gives, by considering the restriction of $G_b$ and $G_b'$ on the set $\{|v|\leq b,\ |u|\leq \pi/2\}$, the same type of inequality for the Mobius Band (\cite{blatter}, theorem 2).

\end{rem}

\begin{cor}(\cite{blatter}, Satz 2)

Let $b_{0}$ be such that $b_{0}\in ]0,\frac{\pi}{2}[$ and
$\tan(b_{0}) = 2b_{0}$, and let $\omega$ be such that
$b_{0}\leq\omega<\frac{\pi}{2}$ and $\sin(\omega) = \left(\beta-\ln\left(\tan\left(\frac{\pi}{4}+\frac{\omega}{2}\right)\right)\right)\cos^{2}(\omega)+2\omega\cos(\omega)$.

Then for every Riemannian metric $g$ on the Mobius band $M$ conformal to
$g_{\beta}$ the following holds:
\begin{enumerate}
    \item If $0<\beta\leq\ \ln\left(\tan\left(\frac{\pi}{4}+\frac{b_{0}}{2}\right)\right)$, then
       \begin{equation*}
l_\sigma(g)l_v(g)\leq\frac{\arcsin\left(\frac{e^{2\beta}-1}{e^{2\beta}+1}\right)}{\frac{e^{2\beta}-1}{e^{2\beta}+1}}vol(g).
\end{equation*}

    \item If $\beta>\ln\left(\tan\left(\frac{\pi}{4}+\frac{b_{0}}{2}\right)\right)$, then
\begin{equation*}
l_\sigma(g)l_v(g)\leq \frac{1}{2\cos(\omega)} vol(g).
\end{equation*}
\end{enumerate}
Moreover, the equality is achieved by a spherical metric in the first case and a flat-spherical one in the second case.
\end{cor}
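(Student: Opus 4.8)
The plan is to mirror the proof of Theorem \ref{1} almost verbatim, replacing the Klein bottle by its ``upper half''. Concretely, I would restrict the extremal metrics $G_b$ and $G_b'$ to the central strip $N=\{|v|\le b,\ |u|\le \pi/2\}$, which is exactly a Mobius band with boundary $\{v=\pm b\}$, and apply the maximality criterion (Theorem \ref{extcond}) with $p=2$ on $N$. The two competing families are again $S_1=\{\gamma_\theta^a : |a|\le\omega,\ \theta\in\mathbb{R}/\pi\mathbb{Z}\}$ --- now keeping only the central spherical band, since the outer cap $2b-\omega\le|v|\le 2b$ has been cut off --- and $S_2=\{\gamma_u : |t|\le b\}$, whose curves run from one boundary component to the other through the central geodesic (this is Blatter's family $F$). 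I would equip them with the same measures $\mu_1=h(a)\,da\otimes d\theta$ and $\mu_2=\cos(\omega)\,du$. Since the identity $^*\mu_1+{}^*\mu_2=dG_b$ is a pointwise condition on the volume form, the very same Abel-inversion density $h(a)=\frac{\sin a}{\pi\cos a}\sqrt{\cos^2 a-\cos^2\omega}$ should carry over without change.

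The arithmetic then differs only by factors of two, which conveniently cancel. On $N$ the central geodesic still has length $l_\sigma(G_b)=\pi$, while the boundary-to-boundary curve has length $l_v(G_b)=2b$ (half of its Klein-bottle value); likewise $S_1$ retains only one of the two symmetric bands, so $m_1=2(\sin\omega-\omega\cos\omega)$ is halved, whereas $m_2=\pi\cos\omega$ is unchanged. Consequently the balance condition $m_1 l_\sigma(G_b)=m_2 l_v(G_b)$ reduces, exactly as in \eqref{omega}, to $\tan\omega=b+\omega$, because the two halvings cancel. Substituting $\sin\omega=(b+\omega)\cos\omega$ yields $vol(G_b)=2\pi[\sin\omega+(b-\omega)\cos\omega]=4\pi b\cos\omega$ and $m_1=2b\cos\omega$, whence $\frac{vol(G_b)}{4 m_1 m_2}=\frac{1}{2\cos\omega}$; inequality \eqref{extineq} with $p=2$ then gives case (2). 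The conformal type is $\beta=\int_0^b\frac{dt}{f}=\ln\bigl(\tan(\tfrac{\pi}{4}+\tfrac{\omega}{2})\bigr)+\frac{b-\omega}{\cos\omega}$ --- precisely half the Klein-bottle value --- and multiplying through by $\cos^2\omega$ reproduces the stated relation $\sin\omega=\bigl(\beta-\ln(\tan(\tfrac{\pi}{4}+\tfrac{\omega}{2}))\bigr)\cos^2\omega+2\omega\cos\omega$.

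For case (1) I would use the purely spherical restriction $G_b'=\cos^2(v)\,du^2+dv^2$ on $|v|\le b$, which is the limiting configuration $\omega=b$. Here $l_\sigma l_v=2\pi b$ and $vol=2\pi\sin b$, so the ratio is $b/\sin b$; the elementary identity $\tan(\tfrac{\pi}{4}+\tfrac{b}{2})=\frac{1+\sin b}{\cos b}$ gives $\frac{e^{2\beta}-1}{e^{2\beta}+1}=\tanh\beta=\sin b$, which converts $b/\sin b$ into the advertised quotient $\arcsin\bigl(\frac{e^{2\beta}-1}{e^{2\beta}+1}\bigr)\big/\frac{e^{2\beta}-1}{e^{2\beta}+1}$. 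The transition between the two regimes occurs exactly when the flat annulus first appears, i.e. when $\omega=b$, which by $\tan\omega=b+\omega$ forces $\tan b=2b$, that is $b=b_0$; in conformal terms this is $\beta=\ln(\tan(\tfrac{\pi}{4}+\tfrac{b_0}{2}))$, matching the threshold in the statement.

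The only genuine care-points --- and where I expect to spend the most effort --- are bookkeeping rather than conceptual: verifying that cutting the Klein bottle in half does not disturb the pointwise reconstruction $^*\mu_1+{}^*\mu_2=dG_b$ near the new boundary $v=\pm b$, and confirming that Theorem \ref{extcond} applies verbatim to a compact manifold \emph{with boundary} whose extremizing family $S_2$ consists of arcs terminating on $\partial N$. Both should be routine, since the reconstruction identity is local and the Cauchy--Schwarz / arithmetic--geometric-mean argument proving Theorem \ref{extcond} never uses closedness of the curves, only the calibration $^*\mu=dg_e$ and the length bounds $m_i l_i(g)\le\int_{S_i}\bar{\varphi}\,d\mu$.
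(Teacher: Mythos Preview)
Your proposal is correct and follows exactly the approach the paper indicates: the paper's entire argument for this corollary is the preceding remark that ``the same method gives, by considering the restriction of $G_b$ and $G_b'$ on the set $\{|v|\leq b,\ |u|\leq \pi/2\}$, the same type of inequality for the Mobius Band.'' You have simply fleshed out that sentence, carrying the factors of two through the mass/length/volume computations and checking that the balance condition $\tan\omega=b+\omega$ and the conformal constant $\tfrac{1}{2\cos\omega}$ survive the halving; the only point not fully argued is the extremality in case~(1), but the paper likewise defers that case to Bavard rather than reproving it.
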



\section{Geometric Inequality of Type $\sigma^n-v$ on the Klein bottle}

In the following, we prove geometric inequality of type $\sigma^n-v$. We use the notations introduced in section \ref{intro}.

\begin{proof}[Proof of theorem \ref{2}]
The inequality in the first case can be deduced from the proof of theorem \ref{1} (first case) since in the case $b<\frac{\pi}{3}$, $L_{\sigma}(G_b') = l_\sigma(G_b')$. It only remains to notice that the conformal type is  
\begin{equation*}
\beta = 2\ln(\tan(\frac{\pi}{4}+\frac{b}{2}))
\end{equation*}
We deduce that for $0<{\beta} \leq {2}\ln\left(2+\sqrt{3}\right)$, 
\begin{equation*}
l_{v}(g)L_{\sigma}(g)\leq\frac{\arcsin\left(\frac{e^{{\beta}}-1}{e^{{\beta}}+1}\right)}{\frac{e^{{\beta}}-1}{e^{{\beta}}+1}}vol(g).
\end{equation*}
Note that for $b> \frac{\pi}{3}$ the horizontal lines closed by the translation $\sigma^2$ become shorter than the curves $\gamma_\theta^a$ and therefore $L_\sigma $ becomes achieved by such lines.\\

To prove the inequality in the second case, we endow $K$ with the metric $H_{b}$: 
\begin{equation*}
H_{b} = \left\{ \begin{array}{l l} \cos^{2}(v)du^{2}+dv^{2} & \quad
\text{if \quad $|v|\leq\frac{\pi}{3}$ or $2b-\frac{\pi}{3}\leq|v|\leq2b$}\\ \frac{1}{4}du^{2}+dv^{2} & \quad \text{if \quad $\frac{\pi}{3}\leq|v|\leq 2b-\frac{\pi}{3}$}\\ \end{array} \right.
\end{equation*}

where $b\geq\frac{\pi}{3}$.
We then consider the three families of curves $:$
\begin{equation*}
S_{1} = \left\{\gamma_{\theta}^{a}: \theta\in \mathbb{R}/\pi \mathbb{Z}, |a|\leq \frac{\pi}{3}\textrm{ or } 2b-\frac{\pi}{3}\leq |a|\leq 2b\right\},	
\end{equation*}

\begin{equation*}
S_{1}' = \left\{\delta_a: \frac{\pi}{3}\leq |a|\leq 2b-\frac{\pi}{3}\right\},	
\end{equation*}

where $\delta_a(t)=(t,a)$, $|t|\leq \pi/2$, and
\begin{equation*}
S_{2} = \left\{\gamma_{u}(t): u\in \mathbb{R}/\pi \mathbb{Z}\right\}	
\end{equation*}
where $\gamma_{u}(t) = (u, t)$ with $|t|\leq 2b$.\\
We put on $S_{1}$ the measure $\mu_{1}$ where,  
\begin{equation*}
\mu_{1} =
\left\{ \begin{array}{l l} h(a)da\otimes d\theta & \quad \text{if}\quad a\geq0\\ h(-a)da\otimes d\theta & \quad \text{if}\quad a<0\\ \end{array} \right.
\end{equation*}
On $S_{2}$, we put the measure $\mu_{2}$, where 
\begin{equation*}
\mu_{2}= \frac{m^{'}}{\pi}du	
\end{equation*}
Finally, on $S_{1}'$, we put the measure $\mu_{3}$, where 
\begin{equation*}
\mu_{3} = \left(1-2\frac{m^{'}}{\pi}\right)da	
\end{equation*}
The condition $^*\mu_1+^*\mu_2+^*\mu_3=dH_b$ gives

\begin{equation*}
h(a) = \frac{\sin(a)}{\pi\cos(a)}.\frac{\cos^{2}(a)-\frac{m^{'}}{2\pi}}{\left(\cos^{2}(a)-\frac{1}{4}\right)^{\frac{1}{2}}}.
\end{equation*}
The mass $m_1$ of the measure on $S_{1}\cup S_{1}'$ is
\begin{align*}
m_1 &= 4\int^{\frac{\pi}{2}}_{-\frac{\pi}{2}}\int^{\frac{\pi}{3}}_{0}h(a)dad\theta + \int^{2b-\frac{\pi}{3}}_{\frac{\pi}{3}}\left(1-\frac{2m^{'}}{\pi}\right)dv\\
&= 2\sqrt{3}+2b-2\frac{\pi}{3}-\frac{4m^{'}b}{\pi},	
\end{align*}
The mass of the measure on $S_{2}$ is $m^{'}$ and $vol(H_{b})=2\pi\left(\sqrt{3}+b-\frac{\pi}{3}\right)$.\\

 We deduce from theorem \ref{extcond} (first part) that
\begin{equation*}
m_1L_{\sigma}(g)+m^{'}l_{v}(g)\leq \sqrt{2\pi\left(\sqrt{3}+b-\frac{\pi}{3}\right)vol(g)}	
\end{equation*}

which holds for every real number $m^{'}\in[0,\frac{\pi}{2}]$ and for every Riemannian metric $g$ conformal to $H_{b}$. Moreover, the equality sign is achieved for $g = H_{b}$.\\ 

Next, we have $L_{\sigma}(H_{b}) = \pi$, and $l_{v}(H_{b}) = 4b$, and then the condition $m_1L_\sigma(H_b)=m'l_v(H_b)$ gives

\begin{equation*}
m^{'} = \frac{3\sqrt{3}\pi+3b\pi-\pi^{2}}{12b} \textrm{ \ and\  } m_1 = \sqrt{3}+3b-\frac{\pi}{3}.
\end{equation*}
Note that since $m^{'}\in[0,\frac{\pi}{2}]$ we have $b \geq \frac{\pi}{3}$.\\

It follows that for every Riemannian metric $g$ conformal to $H_b$ we have
\begin{equation*}
L_\sigma(g)l_v(g)\leq \frac{2b}{\sqrt{3}+b-\frac{\pi}{3}}vol(g)	
\end{equation*}
with equality if and only if $g$ is proportional to $H_{b}$. \\
The conformal type of $H_{b}$ is 
\begin{equation*}
\beta = 2\ln(2+\sqrt{3})+4(b-\frac{\pi}{3}).	
\end{equation*}
Therefore when $b\geq\frac{\pi}{3}$, $\beta\geq 2\ln(2+\sqrt{3})$ and
the desired result follows. 
\end{proof}

\begin{rem} We can also get the same result on the Mobius band $M$ (theorem 3 in \cite{blatter}) using a minor adaptation of the previous proof.
\end{rem}

\begin{cor}(see \cite{blatter}, Satz 3)
For every Riemannian
metric $g$ on the Mobius band $M$ conformal to $g_{\beta}$,
\begin{enumerate}
    \item If $0<\beta\leq\ln\left(2+\sqrt{3}\right)$, then
       \begin{equation*}
sys(g)l_v(g)\leq\frac{\arcsin\left(\frac{e^{2\beta}-1}{e^{2\beta}+1}\right)}{\frac{e^{2\beta}-1}{e^{2\beta}+1}}vol(g).
\end{equation*}
    \item If $\beta>\ln\left(2+\sqrt{3}\right)$, then
\begin{equation*}
sys(g)l_v(g)\leq
\frac{2}{3}.\frac{3\beta+2\pi-3\ln(2+\sqrt{3})}{2\sqrt{3}+\pi\beta-\ln(2+\sqrt{3})}
vol(g).
\end{equation*}
\end{enumerate}
Moreover, the equality is achieved by a spherical metric in the first case and a flat-spherical one in the second case.
\end{cor}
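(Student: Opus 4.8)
The plan is to obtain this corollary from the proof of Theorem \ref{2} by a restriction argument, as the preceding remark indicates. Recall that the Mobius band $M$ of conformal type $\beta$ is the quotient of $\mathbb{R}\times[-b,b]$ by $\sigma:z\mapsto\bar z+\pi$; concretely it is the half-band $\{|u|\leq\pi/2,\ |v|\leq b\}$ of the flat Klein bottle, with its single boundary circle at $v=\pm b$ (identified by $\sigma$) and its core geodesic at $v=0$. For the range $0<\beta\leq\ln(2+\sqrt3)$, which corresponds to $b\leq\pi/3$, the extremal metric is purely spherical (the restriction of $G_b'$ to $\{|v|\leq b\}$): here the core realizes the systole and $sys=l_\sigma=L_\sigma$, so the bound coincides with the first case of the $\sigma-v$ corollary (Bavard's spherical inequality), which supplies the stated $\arcsin$ expression. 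The substance of the proof is therefore the range $\beta>\ln(2+\sqrt3)$, where I would restrict the flat-spherical metric $H_b$ of Theorem \ref{2}: it is $\cos^2(v)\,du^2+dv^2$ on the cap $|v|\leq\pi/3$ and flat, $\tfrac14\,du^2+dv^2$, on the collar $\pi/3\leq|v|\leq b$ adjoining the boundary.

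On this half-band the relevant invariants are $sys(H_b|_M)=L_\sigma(H_b|_M)=\pi$, $l_v(H_b|_M)=2b$ (a shortest curve through the core now runs from boundary to boundary, hence has length $2b$ rather than $4b$), and $vol(H_b|_M)=\pi(\sqrt3+b-\pi/3)$, each being half of the corresponding Klein-bottle quantity apart from the systole, which is unchanged. I would then reproduce the three families of curves of Theorem \ref{2}, retaining only their traces on $\{|v|\leq b\}$: the great-circle arcs $\gamma_\theta^{a}$ with $|a|\leq\pi/3$ carrying $\mu_1=h(a)\,da\otimes d\theta$, the horizontal curves $\delta_a$ with $\pi/3\leq|a|\leq b$ carrying $\mu_3=(1-2m'/\pi)\,da$, and the vertical arcs $\gamma_u$ carrying $\mu_2=(m'/\pi)\,du$. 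Here $S_1\cup S_1'$ feeds $L_\sigma=\inf\{l_\sigma,l_h\}$ (which is the genuine systole of $M$) and $S_2$ feeds $l_v$, so Theorem \ref{extcond} applies with $p=2$. The main computation is to solve the Abel integral equation forcing ${}^{*}\mu_1+{}^{*}\mu_2+{}^{*}\mu_3=dH_b$ on the half-band; it is the same equation as in Theorem \ref{2} and yields the same density $h(a)=\tfrac{\sin a}{\pi\cos a}\cdot\tfrac{\cos^2 a-\frac{m'}{2\pi}}{(\cos^2 a-\frac14)^{1/2}}$, the only change being that the collar integral now runs over $\pi/3\leq|a|\leq b$.

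Next I would impose the balancing condition $m_1\,sys=m'\,l_v$, i.e. $m_1\pi=2bm'$, which pins down the masses and makes $H_b|_M$ the equality case of Theorem \ref{extcond}. Because the extremal metric saturates the inequality, the optimal constant is simply $C_\beta=\dfrac{sys\cdot l_v}{vol}$ evaluated on $H_b|_M$, namely $\dfrac{\pi\cdot 2b}{\pi(\sqrt3+b-\pi/3)}=\dfrac{2b}{\sqrt3+b-\pi/3}$. Finally I would translate $b$ into the conformal type through $\beta=\int_0^b f(v)^{-1}\,dv=\ln(2+\sqrt3)+2(b-\pi/3)$, so that $b-\pi/3=\tfrac12\bigl(\beta-\ln(2+\sqrt3)\bigr)$; substituting into $C_\beta$ and simplifying gives the asserted constant $\tfrac23\cdot\dfrac{3\beta+2\pi-3\ln(2+\sqrt3)}{2\sqrt3+\beta-\ln(2+\sqrt3)}$. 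The threshold $\beta=\ln(2+\sqrt3)$ is precisely $b=\pi/3$, where the flat collar disappears and the two cases meet continuously.

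The step I expect to demand the most care is checking that truncating to the half-band preserves every hypothesis of Theorem \ref{extcond}. One must verify that the restricted measures stay positive — in particular that $h(a)\geq0$ on $|a|\leq\pi/3$ and that $1-2m'/\pi\geq0$, which forces $m'\in[0,\pi/2]$ and hence $b\geq\pi/3$ — that the identity ${}^{*}\mu=dH_b$ still holds after halving the collar, and that the boundary introduces no shorter representative of either $sys$ or $l_v$. Once these positivity and normalization checks are in place, the remaining bookkeeping is routine, and the equality discussion (spherical in the first case, flat-spherical in the second) follows because $H_b|_M$ is by construction the only metric, up to scale, for which all the inequalities above are simultaneously equalities.
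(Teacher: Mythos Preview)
Your proposal is correct and follows precisely the route the paper indicates: the paper's own ``proof'' is only the preceding remark that the result is obtained by a minor adaptation of the proof of Theorem~\ref{2}, and you have carried out exactly that adaptation by restricting $H_b$ and the three measured families of curves to the half-band $\{|v|\leq b\}$, recomputing the masses, and rebalancing. Your final constant $\tfrac{2}{3}\cdot\dfrac{3\beta+2\pi-3\ln(2+\sqrt{3})}{2\sqrt{3}+\beta-\ln(2+\sqrt{3})}$ is in fact the correct one; the $\pi\beta$ appearing in the denominator of the printed statement is a typo (compare with the Klein bottle formula in Theorem~\ref{2}).
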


\section{Geometric Inequality of type $\sigma-v-h$ on the Klein bottle}

In the following, we prove geometric inequality of type $\sigma-v-h$. We use the notations introduced in section \ref{intro}.

\begin{proof}[Proof of theorem \ref{3}]

We define on $K$ the metric
\begin{equation*}
E_{b}(u,v) = \left\{ \begin{array}{l l} \cos^{2}(v)du^{2}+dv^{2} & \quad
\text{if \quad $|v|\leq\omega$ or $2b-\omega\leq|v|\leq2b$}\\ \cos^{2}(\omega)du^{2}+dv^{2} & \quad \text{if \quad $\omega\leq|v|\leq 2b-\omega$}\\ \end{array} \right..
\end{equation*}
where, $\omega$ is a positive real number such that $\omega\leq b$.

We  consider the families of curves 
\begin{equation*}
S_{1} = \left\{\gamma_{\theta}^{a}: |a|\leq \omega \textrm{ or } 2b-\omega\leq |a|\leq 2b, \theta\in \mathbb{R}/\pi \mathbb{Z}\right\},	
\end{equation*}

\begin{equation*}
S_{2} = \left\{\gamma_{u}: u\in \mathbb{R}/\pi \mathbb{Z}\right\},	
\end{equation*}
where $\gamma_{u}(t) = (u, t)$ with $|t|\leq 2b$, and
\begin{equation*}
S_{3} = \left\{\delta_{a}\in \mathbb{R}:\omega\leq |a|\leq 2b-\omega\right\},
\end{equation*}
where, $\delta_a(t)=(t,a)$ with $|t|\leq \frac{\pi}{2}$.\\

We define on $S_{1}$ the measure $\mu_{1}$, where
\begin{equation*}
\mu_{1} = \left\{ \begin{array}{l l} h(a)da\otimes d\theta & \quad \text{if}\quad a\geq0\\ h(-a)da\otimes d\theta & \quad \text{if}\quad a<0\\ \end{array} \right..
\end{equation*}
On $S_{2}$, we define the measure $\mu_{2}$, where 
\begin{equation*}
\mu_{2} = \frac{m^{'}}{\pi}du.	
\end{equation*}
Finally, on $S_{3}$, we define the measure $\mu_{3}$, where 
\begin{equation*}
\mu_{3} = \left(1-\frac{m^{'}}{\pi\cos(\omega)}\right)da.	
\end{equation*}

here $m\in[0,\pi\cos(\omega)]$.

The condition $^*\mu_1+^*\mu_2+^*\mu_3=dE_b$ gives
\begin{equation*}
h(a) = \frac{\sin(a)}{\pi\cos(a)}.\frac{\cos^{2}(a)-\frac{m^{'}}{\pi}\cos(\omega)}{\left(\cos^{2}(a)-\cos^{2}(\omega)\right)^{\frac{1}{2}}},
\end{equation*}

Now, we calculate the masses of the measures $\mu_i$:
\begin{equation*}
m(\mu_{1})=4\int^{\frac{\pi}{2}}_{-\frac{\pi}{2}}\int^{\omega}_{0}h(a)dad\theta = 4\sin(\omega)-\frac{4m^{'}}{\pi}\omega	
\end{equation*}

\begin{equation*}
m(\mu_{2})=\int^{\frac{\pi}{2}}_{-\frac{\pi}{2}}\frac{m^{'}}{\pi}du=m^{'}	
\end{equation*}

\begin{equation*}
m(\mu_{3})=\int^{2b-\omega}_{\omega}\left(1-\frac{m^{'}}{\pi\cos(\omega)}\right)da=2\left(b-\omega\right)\left(1-\frac{m^{'}}{\pi\cos(\omega)}\right),	
\end{equation*}
and
\begin{equation*}
vol(E_{b})=4\pi\sin(\omega)+4\pi(b-\omega)\cos(\omega).	
\end{equation*}

Since $l_{\sigma}(E_{b})=\pi$, $l_{v}(E_{b})=4b$, and $l_{h}(E_{b})= 2\pi\cos(\omega)$, the condition $m(\mu_{1})l_{\sigma}(E_{b}) = m(\mu_{2})l_{v}(E_{b}) = m(\mu_{3})l_{h}(E_{b})$ gives \\

\begin{equation*}
m^{'}=\frac{\pi\sin(\omega)}{b+\omega},	
\end{equation*}

and

\begin{equation*}
m^{'}=\frac{\pi\cos(\omega)(b-\omega)}{2b-\omega}.	
\end{equation*}
Equating the two values of $m^{'}$, we get the relation
\begin{equation*}
\tan(\omega)=\frac{b^{2}-\omega^{2}}{2b-\omega}.	
\end{equation*}

Now, since $m^{'}\in[0,\pi\cos(\omega)]$, we must have
\begin{equation*}
0\leq	\frac{\pi\cos(\omega)(b-\omega)}{2b-\omega}\leq \pi\cos(\omega)
\end{equation*}
which shows that $b>\frac{\omega}{2}$ and therfore $b$ is given by 

\begin{equation*}
b=q(\omega)=\tan(\omega)+\sqrt{\tan^{2}(\omega)-\omega\tan(\omega)+\omega^{2}}.	
\end{equation*}
where, $q:]0,\frac{\pi}{2}[\rightarrow \mathbb{R}^{+}$ is an onto continuous increasing function.\\

Finally we get from $m^{'}=\frac{\pi\sin(\omega)}{b+\omega}$, together with the relation $\tan(\omega)=\frac{b^{2}-\omega^{2}}{2b-\omega}$ and theorem \ref{extcond} that for all $ b> 0$, we have

\begin{equation*}
vol(g)^{\frac{3}{2}}\geq \frac{3\sqrt{3}}{\sqrt{\pi}}.\frac{(b-\omega)\sqrt{(b-\omega)b}}{\left(b^{4}-4b\omega+\omega^{2}+\omega^{4}-2b^{2}(-2+\omega^{2})\right)^{\frac{1}{4}}(2b-\omega)}l_{\sigma}(g)l_{v}(g)l_{h}(g),	
\end{equation*}
with equality if and only if $g$ is proportional to $E_{b}$. 
\end{proof}

\emph{Acknowledgements:} The authors are grateful to St\'ephane Sabourau for useful discussions and remarks.


\begin{thebibliography}{99}

\bibitem{adams} Adams, C.; Reid, A., Systoles of hyperbolic $3$-manifolds. Math. Proc. Cambridge Philos. Soc. 128 (2000), no. 1, 103–-110. 

\bibitem{bavard} Bavard, C., In\'egalit\'e isosystolique pour la bouteille de Klein, Math. Ann. 274, 439--441(1986).

\bibitem{bavard2} Bavard, C., In\'egalit\'es isosystoliques conformes  pour la bouteille de Klein, Geom. Dedicata 27, 349--355 (1988).

\bibitem{bavard3} Bavard, C., In\'egalit\'es isosystoliques conformes, Comment. Math. Helv 67, 146--166 (1992).


\bibitem{bavard5} Bavard, C., Une remarque sur la g\'eom\'etrie systolique de la bouteille de Klein, Arch. Math (Basel) 87, 72--74 (2006).

\bibitem{blatter}  Blatter, C., Zur Riemannschen Geometrie im Grossen auf dem M$\ddot{o}$biusband, Compositio Math. 15, 88--107 (1961).

\bibitem{elmir} Elmir, C.; Lafontaine, J., Sur la g\'eom\'etrie systolique des vari\'et\'es de Bieberbach, Geom. Dedicata 136, 95--110 (2008). 

\bibitem{gromov} Gromov, M., Filling Riemannian manifolds, J. Diff. Geom. 18, 1--147(1983).

\bibitem{jenk}  Jenkins, J. A., Univalent functions and conformal
mapping. Ergebnisse der Mathematik und ihrer Grenzgebiete. Neue
Folge, Heft 18. Reihe: Moderne Funktionentheorie. Berlin 1965.


\bibitem{sab} Katz, M.; Sabourau, S., An optimal systolic inequality for CAT(0) metrics in genus two, Pacific J. Math., vol. 227 Nb 1, 95--107 (2004).

\bibitem{katz} Katz, M., Systolic Geometry and Topology, Math. Surveys and Monographs 137,
Amer. Math. Soc., Providence, R.I. (2007).

\bibitem{pu} Pu, P.M.,  Some inequalities in certain non-orientable Riemannian manifolds, Pacific J. Math. 2, 55--71(1952).

\bibitem{sakai} Sakai, T., A proof of the isosystolic inequality for the Klein bottle, Proc. Amer. Math. Soc. 104, 589--590 (1988).



\end{thebibliography}
\end{document}